\numberwithin{equation}{section}
\numberwithin{figure}{section}
\theoremstyle{plain}
\newtheorem{thm}{\protect\theoremname}
  \theoremstyle{plain}
  \newtheorem{prop}[thm]{\protect\propositionname}
  \theoremstyle{plain}
  \newtheorem{lem}[thm]{\protect\lemmaname}
  \providecommand{\lemmaname}{Lemma}
  \providecommand{\propositionname}{Proposition}
\providecommand{\theoremname}{Theorem}
\begin{document}

\title[Global stability of an endemic equilibrium]{Global stability of an SIS epidemic model with a finite infectious
period}

\author{Yukihiko Nakata, Gergely R\"ost}

\address{Y. Nakata\\Department of Mathematics, Shimane University, 1060 Nishikawatsu-cho, Matsue, Shimane, 690-8504, Japan}
\address{G. R\"ost\\Bolyai Institute, University of Szeged, H-6720 Szeged, Aradi v\'ertan\'uk tere 1., Hungary}
\email[Y. Nakata]{ynakata@riko.shimane-u.ac.jp}

\begin{abstract}
Assuming a general distribution for the sojourn time in the infectious
class, we consider an SIS type epidemic model formulated as a scalar
integral equation. We prove that the endemic equilibrium of the model
is globally asymptotically stable whenever it exists, solving the
conjecture of Hethcote and van den Driessche (1995) for the case of
nonfatal diseases. 
\end{abstract}

\subjclass[2010]{37N25, 45D05}
\keywords{integral equation, delay differential equation, global asymptotic stability}

\maketitle

\section{Introduction}

In the paper \cite{Hethcote:1995}, Hethcote and van den Driessche
formulate an SIS type epidemic model and analyze its qualitative properties.
The model has a disease related death rate, which affects the size
of the total population. The sojourn time of the infectious states
is assumed to follow a general probability density function, though
the authors elaborate the case of a constant infectious period. In
Theorem 5.1 in the paper \cite{Hethcote:1995}, the authors show that
the endemic equilibrium is asymptotically stable when the disease
related death rate is zero and subsequently mention global stability
of the endemic equilibrium as an open problem. See also \cite{Hethcote:2000}
for a study of a similar SIS epidemic model, where the assumed population
demography is different from \cite{Hethcote:1995}.

The problem of global stability does not seem to be fully solved up
to now. In the recent paper \cite{Iggidr:2010} the authors analyze
monotonicity of the semiflow induced by the model proposed in \cite{Hethcote:1995}.
It is shown that the solution semiflow is monotone with respect to
the initial function when a key parameter, namely the basic
reproduction number $R_{0}$, is greater than two. Then the author
of \cite{Liu:2015} proves that many solutions, but not all solutions,
converge to the stable endemic equilibrium if $R_{0}>2$. In those
papers, to derive those results, a constant infectious period is assumed
in the model. Note that in \cite{Iggidr:2010,Liu:2015} the long time
behavior of the solutions for $1<R_{0}\leq2$ is not studied even for the case of constant delay.

Our aim of this paper is to prove that the endemic equilibrium is
indeed globally asymptotically stable for the epidemic model proposed
by Hethcote and van den Driessche in \cite{Hethcote:1995,Hethcote:2000}
for the case of nonfatal diseases. We prove that all positive solutions
tend to the endemic equilibrium if $R_{0}>1$, thus the endemic equilibrium
is \textit{always} globally asymptotically stable. Our proof allows
any probability function which the sojourn time of the infectious
state follows.

This paper is organized as follows. In Section 2 we introduce an SIS
epidemic model, corresponding to the model considered in \cite{Hethcote:1995,Hethcote:2000}
for the case of nonfatal diseases. The model is formulated as a scalar
integral equation describing time evolution of the infectious population.
In Section 3 we prove global stability of the disease free equilibrium.
In Section 4 we show that the endemic equilibrium is asymptotically
stable analyzing the corresponding characteristic equation. Here an
elementary proof for persistence of the solution is also given. We then
prove global attractivity of the endemic equilibrium. In Section 5
we discuss our results.

\section{Endemic model}

Following \cite{Hethcote:1995}, we introduce the SIS epidemic model
of which we analyze its global dynamics. Denote by $\mathcal{F}(a)$
the probability that infective individuals are infective up to infection-age
$a$ (time elapsed since infection). From the biological interpretation,
\[
\mathcal{F}:\left[0,h\right]\to\left[0,1\right]
\]
is nonincreasing with 
\[
\mathcal{F}(0)=1,\ \mathcal{F}(h)=0,
\]
where $h<\infty$ is the maximum infectious period. Without loss of
generality we assume that $\mathcal{F}(a)>0$ for $0<a<h$.

Let $S(t)$ and $I(t)$ be the proportions of the susceptible population and the
infective population at time $t$, respectively, so that 
\begin{equation}
1=S(t)+I(t)\label{eq:total}
\end{equation}
holds for any $t$. The density of infective individuals at time $t$
with respect to infection-age $a$ is given as 
\begin{equation}
i(t,a)=\beta S(t-a)I(t-a)\mathcal{F}(a)e^{-\mu a},\label{eq:individual}
\end{equation}
where $\mu$ is the natural mortality rate and $\beta$ is the constant
transmission coefficient. Note that the density of newly infective
individuals appearing in the host population at time $t$ is 
\[
i(t,0)=\beta S(t)I(t).
\]
Integrating both sides of (\ref{eq:individual}) we obtain the
following renewal equation 
\begin{equation}
I(t)=\beta\int_{0}^{h}S(t-a)I(t-a)\mathcal{F}(a)e^{-\mu a}da,\label{eq:I_RE}
\end{equation}
see also the integral equation (2.2) in \cite{Hethcote:1995}.

We scale the time so that the maximum infectious period becomes $1$
and then subsequently we recall that \eqref{eq:total} holds for $t\geq-1$
(so $I(t)\leq1$ for $t\geq-1$). Then we get a scalar renewal equation
for the proportion of infective population: 
\begin{equation}
I(t)=\beta\int_{0}^{1}\left(1-I(t-a)\right)I(t-a)\mathcal{F}(a)e^{-\mu a}da\label{eq:I_RE2}
\end{equation}
from (\ref{eq:I_RE}).

Since $\mathcal{F}$ is a monotone function, $\mathcal{F}$ is a bounded
variation function such that 
\[
-\int_{0}^{1}d\mathcal{F}(a)=\mathcal{F}(0)-\mathcal{F}(1)=1
\]
holds. The proportion of individuals who obtain susceptibility per
unit time at time $t$ is given as 
\[
-\beta\int_{0}^{1}S(t-a)I(t-a)e^{-\mu a}d\mathcal{F}(a),
\]
where the integral is Riemann-Stieljes integral. Differentiating the
equation (\ref{eq:I_RE2}) one also obtains the following delay differential
equation 
\begin{equation}
\frac{d}{dt}I(t)=\beta(1-I(t))I(t)+\beta\int_{0}^{1}(1-I(t-a))I(t-a)e^{-\mu a}d\mathcal{F}(a)-\mu I(t).\label{eq:I_DE}
\end{equation}
Equations \eqref{eq:I_RE2} and \eqref{eq:I_DE} with 
\begin{equation}
\mathcal{F}(a)=\begin{cases}
1, & 0\leq a<1\\
0, & a=1
\end{cases}\label{eq:step}
\end{equation}
can be found in \cite{Hethcote:1995,Hethcote:2000} for the case of
no disease induced death rate. 

Denote by $C([-1,0],\mathbb{R})$ the Banach space of continuous functions
mapping the interval $[-1,0]$ into $\mathbb{R}$ equipped with the
sup-norm. The initial function is chosen from the subset of continuous
functions: 
\[
Y:=\left\{ \phi\in C\left(\left[-1,0\right],\left[0,1\right]\right)\left|\phi(0)=G(\phi)\right.\right\} ,
\]
where $G:C\left(\left[-1,0\right],\left[0,1\right]\right)\to\mathbb{R}$
is defined as 
\[
G(\phi):=\beta\int_{0}^{1}\left(1-\phi(-a)\right)\phi(-a)\mathcal{F}(a)e^{-\mu a}da.
\]
The initial function for (\ref{eq:I_RE2}), and for (\ref{eq:I_DE}),
is given as 
\begin{equation}
I(\theta)=\psi(\theta),\ \theta\in\left[-1,0\right]\label{eq:IC}
\end{equation}
with $\psi\in Y$. We introduce a standard notation $I_{t}:[-1,0]\to\mathbb{R}_{+}$
defined via the relation $I_{t}(\theta)=I(t+\theta)$ for $\theta\in[-1,0]$.
It is straightforward to see that the set $Y$ is forward invariant
under the semiflow induced by (\ref{eq:I_RE2}) with the initial condition
(\ref{eq:IC}) i.e., $I_{t}\in Y,\ t>0.$

\section{Global stability of the disease free equilibrium}

The basic reproduction number is easily computed as 
\[
R_{0}=\beta\int_{0}^{1}\mathcal{F}(a)e^{-\mu a}da.
\]
We here prove global stability of the disease free equilibrium when
$R_{0}\leq1$ holds, by using similar ideas as in the proof of Theorem
4.2 in \cite{Hethcote:1995} for the case that $\mathcal{F}$ is given
by \eqref{eq:step}. 
\begin{thm}
\label{thm:gas1}Let us assume that $R_{0}\leq1$ holds. Then the
disease free equilibrium is globally asymptotically stable in $Y$.\end{thm}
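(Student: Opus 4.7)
The plan is to establish Lyapunov stability and global attractivity separately, exploiting two elementary inequalities for the nonlinearity $(1-x)x$ on $[0,1]$. No Lyapunov functional or spectral machinery appears to be needed.

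First, from the trivial bound $(1-x)x \leq x$ applied in the renewal equation \eqref{eq:I_RE2}, I obtain
\[
I(t) \;\leq\; \beta \int_0^1 I(t-a)\,\mathcal{F}(a) e^{-\mu a}\,da \;\leq\; R_{0}\cdot \sup_{s\in [t-1,t]} I(s) \;\leq\; \sup_{s\in [t-1,t]} I(s),
\]
using $R_{0}\leq 1$ in the last step. Iterating this over successive intervals of length $1$ (starting from $[-1,0]$, where $I=\psi$ is bounded by $\|\psi\|$), a straightforward induction shows $\sup_{t\geq -1} I(t) \leq \|\psi\|$. This gives Lyapunov stability of the disease-free equilibrium with $\delta = \varepsilon$.

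For attractivity, I use the sharper bound $(1-x)x \leq 1/4$, which combined with $R_{0}\leq 1$ yields $I(t)\leq R_{0}/4 \leq 1/4$ for all $t\geq 1$. Consequently $q:=\limsup_{t\to\infty} I(t)\in [0,1/4]$. Since $q<1/2$ and $x\mapsto (1-x)x$ is nondecreasing on $[0,1/2]$, for any $\varepsilon>0$ with $q+\varepsilon<1/2$ there exists $T$ so large that $0\leq I(s)\leq q+\varepsilon$, and hence $(1-I(s))I(s)\leq (1-(q+\varepsilon))(q+\varepsilon)$, for every $s\geq T$. Plugging this into \eqref{eq:I_RE2} gives
\[
I(t) \;\leq\; R_{0}\bigl(1-(q+\varepsilon)\bigr)(q+\varepsilon) \;\leq\; \bigl(1-(q+\varepsilon)\bigr)(q+\varepsilon) \qquad (t\geq T+1).
\]
Taking $\limsup$ on the left and letting $\varepsilon \to 0$ produces $q \leq (1-q)q$, i.e.\ $q^{2}\leq 0$, so $q=0$, which combined with $I\geq 0$ yields $I(t)\to 0$.

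I do not foresee a substantial obstacle. The only subtlety is that the naive linear bound $I(t)\leq R_{0}\sup_{[t-1,t]} I$ degenerates at $R_{0}=1$, and this is precisely what is circumvented by retaining the quadratic nonlinearity $(1-x)x$ and invoking its monotonicity on $[0,1/2]$; the a priori bound $I(t)\leq 1/4$ ensures that we eventually fall into this monotone range. Nonnegativity of $I$ (so that $\liminf I \geq 0$ automatically) follows from the integral representation \eqref{eq:I_RE2} together with $I_{t}\in Y\subset C([-1,0],[0,1])$.
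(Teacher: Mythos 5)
Your attractivity argument is correct and is essentially the paper's own proof in a different dress: the paper iterates the logistic-type map $M_{n}=R_{0}M_{n-1}(1-M_{n-1})$ starting from $M_{0}=\frac{1}{4}$ and shows $I(t)\leq M_{n}$ on $[n,n+1]$, while you run the equivalent $\limsup$ fixed-point computation $q\leq R_{0}(1-q)q$ after securing the a priori bound $I(t)\leq\frac{1}{4}$ and invoking monotonicity of $x\mapsto(1-x)x$ on $[0,\frac{1}{2}]$. That half is fine.

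The stability half, however, has a genuine gap as written. From the linear bound $(1-x)x\leq x$ you obtain only
\[
I(t)\;\leq\;R_{0}\sup_{s\in[t-1,t]}I(s)\;\leq\;\sup_{s\in[t-1,t]}I(s),
\]
and this non-strict inequality cannot be iterated to give $\sup_{t\geq-1}I(t)\leq\|\psi\|$: since the interval $[t-1,t]$ contains $t$ itself, the inequality is satisfied by \emph{any} nondecreasing function (e.g.\ $f(t)=t$ has $f(t)=\sup_{[t-1,t]}f$), so the ``straightforward induction'' does not close --- the inductive step only yields $\sup_{[n,n+1]}I\leq\max\bigl(\sup_{[n-1,n]}I,\ \sup_{[n,n+1]}I\bigr)$, which is vacuous when the second entry is the larger one. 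At $R_{0}=1$ you have no slack from the factor $R_{0}$, so you must not discard the quadratic: the repair is to keep $(1-x)x$ and use that $\sup_{x\in[0,M]}(1-x)x<M$ for every $M>0$. Concretely, if $I$ exceeded $\|\psi\|$ somewhere on $[-1,T]$, take a point $t_{1}>0$ where the maximum $M=\sup_{[-1,T]}I>\|\psi\|\geq0$ is attained; then $I(t_{1})\leq R_{0}\sup_{s\in[t_{1}-1,t_{1}]}(1-I(s))I(s)\leq R_{0}\,\sup_{x\in[0,M]}(1-x)x<M$, a contradiction. With that strict-maximum argument in place, $\sup_{t\geq-1}I(t)\leq\|\psi\|$ and $\delta=\varepsilon$ do give Lyapunov stability (the paper itself is terse on this point, asserting stability ``from the estimation above''), and the rest of your proof stands.
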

\begin{proof}
Consider a sequence $\left\{ M_{n}\right\} _{n=0}^{\infty}$ determined by 
\[
M_{n}=R_{0}M_{n-1}\left(1-M_{n-1}\right)
\]
with $M_{0}=\frac{1}{4}$. It holds that $M_{n}$ monotonically
decreases and that $\lim_{n\to\infty}M_{n}=0$ if $R_{0}\leq1$ holds.
From the equation (\ref{eq:I_RE2}) we have 
\[
I(t)\leq\frac{1}{4}R_{0}\leq\frac{1}{4},\ t\in\left[0,1\right].
\]
One can see that 
\[
I(t)\leq M_{n},\ t\in[n,n+1],
\]
thus we get $\lim_{t\to\infty}I(t)=0$. Stability of the disease free
equilibrium follows from the estimation above. 
\end{proof}

\section{Global stability of the endemic equilibrium}

An endemic equilibrium emerges if $R_{0}>1$ holds. In Theorem 5.1
in \cite{Hethcote:1995} the endemic equilibrium is shown to be asymptotically
stable when $\mathcal{F}$ is given by \eqref{eq:step}, see also
Theorem 4 in \cite{Hethcote:2000}. In the following proposition we
show that the endemic equilibrium is asymptotically stable for any
$\mathcal{F}$. 

Let us write $\hat{p}$ to denote the constant function in $Y$ satisfying
$\hat{p}(\theta)=p$ for $\theta\in[-1,0]$. We then define 
\[
Y_{+}:=Y\setminus\left\{ \hat{0}\right\} .
\]
In the following, to generate a positive solution, we assume $\psi\in Y_{+}$. 
\begin{prop}
\label{prop:LAS}Let us assume that $R_{0}>1$ holds. There exists
a unique endemic equilibrium $\hat{I^{*}}\in Y_{+}$, where 
\begin{equation}
I^{*}=1-\frac{1}{R_{0}}.\label{eq:EE}
\end{equation}
The endemic equilibrium is asymptotically stable. \end{prop}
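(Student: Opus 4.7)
The plan is to split the proof into three parts: (i) existence and uniqueness of $\hat{I^*}$; (ii) derivation of the characteristic equation for the linearisation at $\hat{I^*}$; and (iii) a case analysis on $R_{0}$ ruling out characteristic roots with nonnegative real part. Local asymptotic stability then follows from the principle of linearised stability for renewal equations of the form \eqref{eq:I_RE2}.

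For (i), substituting a constant $I \equiv c$ into \eqref{eq:I_RE2} forces $c = R_{0}\, c(1-c)$, so the unique positive constant solution is $I^{*} = 1 - 1/R_{0} \in (0,1)$ when $R_{0}>1$. For (ii), setting $I(t) = I^{*} + x(t)$ and retaining only linear terms yields
\[
x(t) = \beta(1-2I^{*}) \int_{0}^{1} \mathcal{F}(a)\, e^{-\mu a}\, x(t-a)\, da,
\]
and substituting $x(t) = e^{\lambda t}$, together with the equilibrium identity $\beta(1-I^{*}) \int_{0}^{1} \mathcal{F}(a) e^{-\mu a}\, da = 1$, reduces the characteristic equation to $(2-R_{0})\, K(\lambda) = 1$, where
\[
K(\lambda) := \frac{\int_{0}^{1} \mathcal{F}(a)\, e^{-(\mu+\lambda)a}\, da}{\int_{0}^{1} \mathcal{F}(a)\, e^{-\mu a}\, da}
\]
is the Laplace transform of the nonincreasing probability density $p(a) := \mathcal{F}(a) e^{-\mu a}/\int_{0}^{1} \mathcal{F}(s) e^{-\mu s}\, ds$ on $[0,1]$. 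Note that $p(1) = 0$, $K(0) = 1$, and $|K(\lambda)| \le 1$ on the closed right half-plane $\{\mathrm{Re}\,\lambda \ge 0\}$.

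For $1 < R_{0} \le 2$ the simple bound $|(2-R_{0}) K(\lambda)| \le 2-R_{0} < 1$ already gives $(2-R_{0}) K(\lambda) \ne 1$, ruling out roots in the closed right half-plane. I expect the main obstacle to be the case $R_{0} > 2$: here a root would force $K(\lambda)$ to equal the \emph{negative} real number $1/(2-R_{0})$, and once $R_{0} \ge 3$ the modulus bound no longer excludes this. The plan is to prove the sharper geometric statement that the range of $K$ on $\{\mathrm{Re}\,\lambda \ge 0\}$ misses the negative real axis entirely.

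To verify this, fix $\lambda = x + iy$ with $x \ge 0$. The weight $p(a) e^{-xa}$ is still nonincreasing on $[0,1]$ and vanishes at $a = 1$, so writing $\nu := -d\bigl(p(a) e^{-xa}\bigr)$ for the associated positive Borel measure, integration by parts yields, for $y \ne 0$,
\[
\mathrm{Im}\, K(\lambda) = -\int_{0}^{1} p(a) e^{-xa} \sin(ya)\, da = -\frac{1}{y} \int_{0}^{1} \bigl(1 - \cos(ya)\bigr)\, d\nu(a),
\]
which is nonpositive for $y > 0$. Equality forces $\nu$ to be concentrated on the lattice $\{2\pi k/y : k \in \mathbb{Z}\} \cap [0,1]$, where $\sin(ya)$ also vanishes, and a parallel integration by parts then gives $\mathrm{Re}\, K(\lambda) = (1/y) \int \sin(ya)\, d\nu = 0$, so $K(\lambda) = 0$. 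The case $y < 0$ follows by conjugate symmetry, and $K(x) > 0$ for $y = 0$ is immediate. Hence $K(\lambda) \notin (-\infty, 0)$ on the closed right half-plane, which rules out roots for every $R_{0} > 2$ and completes the proof. The two structural inputs carrying this step are the monotonicity of $\mathcal{F}$ (giving positivity of $\nu$) and the normalisation $\mathcal{F}(1) = 0$ (which handles the boundary term cleanly in the integration by parts).
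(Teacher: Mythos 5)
Your proposal is correct, and while it shares the paper's core computational engine -- the same characteristic equation $1=(1-2I^{*})\beta\int_{0}^{1}e^{-\lambda a}\mathcal{F}(a)e^{-\mu a}\,da$, integration by parts against the positive measure $-d\bigl(\mathcal{F}(a)e^{-\mu a}\bigr)$ of total mass $1$, and the normalisation $\mathcal{F}(1)=0$ to kill the boundary term -- its logical structure is genuinely different and, in one respect, tighter. The paper only rules out roots \emph{on} the imaginary axis: it sets $\lambda=i\omega$, shows $\int_{0}^{1}\cos(\omega a)\,d\tilde{\mathcal{F}}(a)=-1$ forces $\int_{0}^{1}\sin(\omega a)\,d\tilde{\mathcal{F}}(a)=0$ via the modulus bound on $\int e^{i\omega a}d\tilde{\mathcal{F}}$, and derives a contradiction with the real part; it then needs the auxiliary claim that no roots lie in the right half-plane for small $I^{*}$ (stated without proof as ``easily seen'') together with an implicit continuity/root-crossing argument and a priori bounds to conclude. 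Your argument instead excludes the \emph{entire} closed right half-plane directly: rewriting the equation as $(2-R_{0})K(\lambda)=1$, disposing of $1<R_{0}\le 2$ by $|K|\le 1$, and for $R_{0}>2$ proving that the range of $K$ on $\{\mathrm{Re}\,\lambda\ge 0\}$ avoids $(-\infty,0)$ -- your equality-case analysis (concentration of $\nu$ on the zero set of $1-\cos(ya)$, forcing $K(\lambda)=0$) is the measure-theoretic counterpart of the paper's extremal argument, applied at general $\lambda=x+iy$ with $x\ge 0$ rather than only at $x=0$. What this buys is a self-contained proof that needs no homotopy in $I^{*}$ and no separate a priori bound on where roots can appear; what it costs is nothing beyond the same two structural hypotheses the paper uses (monotonicity of $\mathcal{F}$ and $\mathcal{F}(1)=0$). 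Both proofs ultimately rest on the cited principle of linearised stability for Volterra functional equations, which you, like the paper, invoke without further elaboration.
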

\begin{proof}
Let $R_{0}>1$ hold. The endemic equilibrium satisfies the following
equation: 
\[
1=\left(1-I\right)\beta\int_{0}^{1}\mathcal{F}(a)e^{-\mu a}da=R_{0}\left(1-I\right),
\]
so we get (\ref{eq:EE}). 

We apply the principle of linearized stability established in \cite{Diekmann:2007}
for Volterra functional equations. The Fr\'echet derivative of $G:C\left(\left[-1,0\right],\left[0,1\right]\right)\to\mathbb{R}$
evaluated at the endemic equilibrium $\hat{I^{*}}\in Y_{+}$ can be
computed as 
\[
DG(\hat{I^{*}})\phi=\left(1-2I^{*}\right)\beta\int_{0}^{1}\phi(-a)\mathcal{F}(a)e^{-\mu a}da.
\]
Thus the characteristic equation is 
\begin{equation}
1=\left(1-2I^{*}\right)\beta\int_{0}^{1}e^{-\lambda a}\mathcal{F}(a)e^{-\mu a}da,\ \lambda\in\mathbb{C}.\label{eq:cheq}
\end{equation}
Denote $\tilde{\mathcal{F}}(a)=\mathcal{F}(a)e^{-\mu a}$. Using the
partial integration 
\[
\int_{0}^{1}e^{-\lambda a}\tilde{\mathcal{F}}(a)da=\frac{1}{\lambda}\left(1+\int_{0}^{1}e^{-\lambda a}d\tilde{\mathcal{F}}(a)\right),
\]
one can deduce a priori bounds for the roots of the characteristic
equation. We now show that the real part of all roots of (\ref{eq:cheq})
is negative. First it can be easily seen that the characteristic equation
(\ref{eq:cheq}) does not have a root with positive real part for
small $I^{*}>0$. Suppose that there exists a root $\lambda=i\omega$
with $\omega>0$. We get the following two equations\begin{subequations}\label{eq:cheqREIM}
\begin{align}
1 & =\left(1-2I^{*}\right)\beta\int_{0}^{1}\tilde{\mathcal{F}}(a)\cos\left(\omega a\right)da.\label{eq:cheq1}\\
0 & =\int_{0}^{1}\tilde{\mathcal{F}}(a)\sin\left(\omega a\right)da.\label{eq:cheq2}
\end{align}
\end{subequations}Integrating by parts we have 
\begin{align*}
0 & =\int_{0}^{1}\tilde{\mathcal{F}}(a)\sin\left(\omega a\right)da.\\
 & =\frac{1}{\omega}\left(\left[-\tilde{\mathcal{F}}(a)\cos\left(\omega a\right)\right]_{0}^{1}+\int_{0}^{1}\cos\left(\omega a\right)d\tilde{\mathcal{F}}(a)\right)\\
 & =\frac{1}{\omega}\left(1+\int_{0}^{1}\cos\left(\omega a\right)d\tilde{\mathcal{F}}(a)\right),
\end{align*}
thus $\int_{0}^{1}\cos\left(\omega a\right)d\tilde{\mathcal{F}}(a)=-1$
follows. Noting that 
\[
\left(\int_{0}^{1}\cos\left(\omega a\right)d\tilde{\mathcal{F}}(a)\right)^{2}+\left(\int_{0}^{1}\sin\left(\omega a\right)d\tilde{\mathcal{F}}(a)\right)^{2}\leq1
\]
holds, we see $\int_{0}^{1}\sin\left(\omega a\right)d\tilde{\mathcal{F}}(a)=0$
holds. Since one has that 
\begin{align*}
\int_{0}^{1}\tilde{\mathcal{F}}(a)\cos\left(\omega a\right)da & =\frac{1}{\omega}\left(\left[\tilde{\mathcal{F}}(a)\sin\left(\omega a\right)\right]_{0}^{1}-\int_{0}^{1}\sin\left(\omega a\right)d\tilde{\mathcal{F}}(a)\right)\\
 & =-\frac{1}{\omega}\int_{0}^{1}\sin\left(\omega a\right)d\tilde{\mathcal{F}}(a),
\end{align*}
we finally obtain $\int_{0}^{1}\tilde{\mathcal{F}}(a)\cos\left(\omega a\right)da=0$.
Therefore we get a contradiction to the equality in (\ref{eq:cheq1}).
Consequently in the complex plane the characteristic roots do not
cross the imaginary axis. 
\end{proof}
Next we show persistence of the solution when $R_{0}>1$. 
\begin{prop}
\label{prop:persistence}Let $R_{0}>1$ holds. Then 
\[
\liminf_{t\to\infty}I(t)>0.
\]
\end{prop}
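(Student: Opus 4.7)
The plan is to argue by contradiction: assume $\liminf_{t\to\infty}I(t)=0$ and obtain an inconsistency in three steps---positivity, weak persistence, strong persistence. For positivity, note that $\psi\in Y_{+}$ is neither $\hat 0$ nor $\hat 1$ (the latter contradicts the defining relation $\psi(0)=G(\psi)$, since $G(\hat 1)=0$); so $\psi$ takes values in $(0,1)$ on a set of positive measure, the integrand in $G(\psi)$ is strictly positive there, and $\psi(0)=G(\psi)>0$. This positivity then propagates through \eqref{eq:I_RE2} to give $I(t)>0$ for all $t\ge 0$.

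For weak persistence I claim $\limsup_{t\to\infty}I(t)\ge I^{*}$. For any $\epsilon\in(0,I^{*})$ we have $R_{0}(1-\epsilon)>1$; if instead $I(t)<\epsilon$ for all $t\ge T-1$, equation \eqref{eq:I_RE2} yields
\[
I(t) \ge \beta(1-\epsilon)\int_{0}^{1}\tilde{\mathcal{F}}(a)I(t-a)\,da,\qquad t\ge T.
\]
Multiplying by $e^{-\sigma t}$ with $\sigma>0$, integrating over $[T,\infty)$, and applying Fubini on the nonnegative integrand produces
\[
\int_{T}^{\infty}e^{-\sigma t}I(t)\,dt \ge \beta(1-\epsilon)\int_{0}^{1}\tilde{\mathcal{F}}(a)e^{-\sigma a}\,da\cdot\int_{T}^{\infty}e^{-\sigma t}I(t)\,dt.
\]
The Laplace integral is strictly positive by Step~1 and finite since $I\le 1$; cancelling it and letting $\sigma\to 0^{+}$ gives $1\ge R_{0}(1-\epsilon)>1$, absurd. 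So $\limsup I\ge\epsilon$ for every $\epsilon<I^{*}$, i.e.\ $\limsup I\ge I^{*}$.

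For strong persistence, given $\liminf I=0$ pick $t_{n}\to\infty$ with $I(t_{n})\to 0$. From \eqref{eq:I_DE} the derivative $\dot I$ is uniformly bounded---the Stieltjes delay term is controlled by the finite total variation of $\tilde{\mathcal{F}}$---so $\{I(\cdot+t_{n})\}$ is equicontinuous; Arzel\`a--Ascoli extracts a subsequence converging uniformly on compacts to $I_{*}\colon\mathbb{R}\to[0,1]$ which solves \eqref{eq:I_RE2} on $\mathbb{R}$ with $I_{*}(0)=0$. Positivity of $\tilde{\mathcal{F}}$ on $(0,1)$ combined with nonnegativity of $(1-I_{*})I_{*}$ forces $I_{*}(-a)\in\{0,1\}$ for each $a\in[0,1]$; continuity and $I_{*}(0)=0$ then give $I_{*}\equiv 0$ on $[-1,0]$. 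A Gronwall bootstrap using $I_{*}(t)\le\beta\int_{0}^{t}I_{*}(s)\,ds$ iterates this to $[0,1]$, then $[1,2]$, and so on (a symmetric argument handles negative times), so $I_{*}\equiv 0$ on all of $\mathbb{R}$ and $\hat 0\in\omega(\psi)$. Combined with weak persistence, $\omega(\psi)\neq\{\hat 0\}$. I then invoke the Butler--McGehee lemma on the semiflow in $Y$: the weak-persistence estimate above applies to every initial condition in $Y_{+}$, so no positive trajectory converges to $\hat 0$, i.e.\ $W^{s}(\hat 0)\cap Y_{+}=\emptyset$; but Butler--McGehee would produce a point of $\omega(\psi)\cap W^{s}(\hat 0)\setminus\{\hat 0\}$, which is impossible. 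Hence $\liminf I(t)>0$. The main obstacle is precisely this last stage: the compactness/limiting-equation analysis and the Butler--McGehee application must both be carried out rigorously in the infinite-dimensional state space $Y$, whereas the Laplace estimate of Step~2 is elementary.
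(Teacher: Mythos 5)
Your overall strategy (positivity, then weak persistence $\limsup_{t\to\infty}I(t)\ge I^{*}$, then an upgrade to strong persistence via limiting equations and Butler--McGehee) is a recognizable route from persistence theory, and the first two steps are essentially sound: the positivity argument is correct (though the propagation of $I(t)>0$ deserves a line ruling out the degenerate case $I\equiv 1$ on a subwindow, e.g.\ by looking at the first zero of $I$), and the Laplace-transform computation in Step~2 is clean --- after Fubini one gets $\int_{T-a}^{\infty}e^{-\sigma s}I(s)\,ds\ge\int_{T}^{\infty}e^{-\sigma s}I(s)\,ds$, cancellation is legitimate because the transform is finite and positive, and $\sigma\to 0^{+}$ gives the contradiction $1\ge R_{0}(1-\epsilon)>1$. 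The genuine gap is in Step~3, and you have named it yourself without closing it. The Butler--McGehee lemma for a semiflow on the infinite-dimensional space $Y$ requires (a) that $\omega(\psi)$ be a nonempty \emph{compact} invariant set, which in turn requires the orbit $\{I_{t}\}_{t\ge 1}$ to be precompact in $C([-1,0])$ --- true here because $\dot I$ is uniformly bounded, but this must be stated and used, not assumed; and (b) that $\{\hat 0\}$ be an \emph{isolated} invariant set, i.e.\ the maximal invariant set in some neighborhood of itself, which needs its own argument (for instance, that any entire orbit other than $\hat 0$ lying in $\{\phi:\|\phi\|<I^{*}/2\}$ would violate the weak persistence of Step~2 applied to its forward trajectory). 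Until (a) and (b) are verified, the final contradiction is not established, so as written the proof is incomplete precisely at the step that delivers the conclusion $\liminf_{t\to\infty}I(t)>0$ rather than merely $\limsup_{t\to\infty}I(t)>0$.

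It is worth seeing how the paper avoids all of this machinery. Its proof is elementary and direct: fix $q\in(0,1)$ with $qR_{0}>1$; if $I$ gets arbitrarily close to $0$ at large times, choose times $t_{n}\to\infty$ at which $I(t_{n})=\min\{I(t):t\in[t_{n}-1,t_{n}]\}$ and $I(t_{n})\to 0$, with $I(t)<\tfrac12$ and $1-I(t)>q$ for large $t$. Since $x\mapsto x(1-x)$ is increasing on $[0,\tfrac12]$ and $I(t_{n}-a)\ge I(t_{n})$ for all $a\in[0,1]$, the renewal equation \eqref{eq:I_RE2} gives $I(t_{n})\ge R_{0}I(t_{n})(1-I(t_{n}))>qR_{0}I(t_{n})$; dividing by $I(t_{n})>0$ yields $1>qR_{0}$, a contradiction. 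This single trailing-window-minimum trick delivers $\liminf_{t\to\infty}I(t)>0$ in one stroke, with no Laplace transforms, no Arzel\`a--Ascoli, no $\omega$-limit sets, and no Butler--McGehee. If you want to salvage your route, the missing compactness and isolatedness verifications are both standard and true in this setting, but the elementary argument makes them unnecessary.
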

\begin{proof}
We can choose a $q\in(0,1)$ such that $qR_{0}>1$ holds. Assume that
there is a solution with $\psi(0)>0$ such that $\lim_{t\to\infty}I(t)=0$.
Then there is a $T$ such that for $t>T$, $I(t)<\frac{1}{2}$ and
$1-I(t)>q$ hold. There is a sequence $t_{n}\to\infty$ $(n\to\infty)$
such that 
\[
I(t_{n})=\min\{I(t):t\in[t_{n}-1,t_{n}]\}\hbox{ and }\lim_{n\to\infty}I(t_{n})=0.
\]
Then, for $t_{n}>T+1$ we have 
\[
I(t_{n})=\beta\int_{0}^{1}I(t_{n}-a)(1-I(t_{n}-a))\mathcal{F}(a)e^{-\mu a}da,
\]
and from the monotonicity of the quadratic map $x\mapsto x(1-x)$ on $[0,\frac{1}{2}]$,
we obtain 
\[
I(t_{n})\geq\beta\int_{0}^{1}I(t_{n})(1-I(t_{n}))\mathcal{F}(a)e^{-\mu a}da=R_{0}I(t_{n})(1-I(t_{n})).
\]
Since we have $1-I(t_{n})>q$, dividing by $I(t_{n})$ we find the
contradiction $1>qR_{0}.$ 
\end{proof}
We now introduce the notations 
\[
\underline{I}=\liminf_{t\to\infty}I(t),\quad \overline{I}=\limsup_{t\to\infty}I(t).
\]
From Proposition \ref{prop:persistence} one can easily see that 
\[
0<\underline{I}\leq\overline{I}\leq1.
\]
The proof for the global attractivity of the endemic equilibrium is
divided into two cases: $1<R_{0}\leq2$ and $2<R_{0}$. Using the
monotonicity of the quadratic map $x\mapsto x(1-x)$ on the two intervals
$\left[0,\frac{1}{2}\right]$ and $\left[\frac{1}{2},1\right]$ for
the two cases respectively, we show that $\underline{I}=\overline{I}=I^{*}$
holds.

First let us assume that $1<R_{0}\le2$ holds. We define the subspace
\[
Y_{1}:=\left\{ \phi\in Y\left|0\leq\phi(\theta)\leq\frac{1}{2},\ \theta\in\left[-1,0\right]\right.\right\} 
\]
and first prove that $Y_{1}$ attracts all positive solutions. 
\begin{lem}
\label{lem:inv1}Let us assume that $1<R_{0}\leq2$. Then 
\[
I(t)\leq\frac{1}{2},\ t\geq0.
\]
thus 
\[
I_{t}\in Y_{1},\ t>1
\]
for any $\psi\in Y_{+}$.\end{lem}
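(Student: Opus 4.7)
The plan is to exploit the renewal equation \eqref{eq:I_RE2} directly, combined with the elementary inequality $x(1-x) \leq \frac{1}{4}$ valid for every $x \in [0,1]$. Since the set $Y$ is forward invariant under the semiflow, we already know $I(s) \in [0,1]$ for all $s \geq -1$, so $(1 - I(t-a))I(t-a) \leq \frac{1}{4}$ for every $t \geq 0$ and $a \in [0,1]$. Substituting this pointwise bound into \eqref{eq:I_RE2} yields
\[
I(t) \leq \frac{\beta}{4}\int_{0}^{1} \mathcal{F}(a) e^{-\mu a}\,da = \frac{R_{0}}{4},
\]
so the hypothesis $R_{0} \leq 2$ immediately gives $I(t) \leq \frac{1}{2}$ for all $t \geq 0$.

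For the second conclusion, I would observe that for any $t > 1$ and $\theta \in [-1,0]$ the shifted argument $t+\theta$ lies in $[t-1, t] \subset [0, \infty)$, so the uniform bound above shows $I_{t}(\theta) = I(t+\theta) \leq \frac{1}{2}$. Combined with the forward invariance $I_{t} \in Y$, this places $I_{t}$ in the subspace $Y_{1}$.

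There is no genuine obstacle here: the lemma is essentially one line of integration that uses only the explicit expression of $R_{0}$ recorded in Section 3 and the maximum of the quadratic map on $[0,1]$. The reason $R_{0} \leq 2$ is exactly the right threshold is that $x(1-x)$ attains its maximum $\frac{1}{4}$ on $[0,1]$, so the estimate $R_{0}/4$ drops below $\frac{1}{2}$ precisely when $R_{0} \leq 2$; this is also what identifies $[0,\frac{1}{2}]$ as the natural invariant strip on which the monotonicity of $x \mapsto x(1-x)$ is available for the subsequent global attractivity argument.
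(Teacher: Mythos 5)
Your proposal is correct and follows exactly the paper's argument: bound $(1-x)x\leq\frac{1}{4}$ on $[0,1]$, insert this into the renewal equation \eqref{eq:I_RE2} to get $I(t)\leq\frac{R_{0}}{4}\leq\frac{1}{2}$ for $t\geq0$, and then note that for $t>1$ the history segment lies entirely in $[0,\infty)$ so that $I_{t}\in Y_{1}$. There is nothing to add.
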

\begin{proof}
Since $(1-x)x\le\frac{1}{4}$ for $x\in\left[0,1\right]$ we can estimate
the equation (\ref{eq:I_RE2}) as 
\[
I(t)\leq\frac{1}{4}R_{0}\le\frac{1}{2}.
\]
Then solution $I_{t},\ t>1$ satisfies $0\leq I_{t}(\theta)=I(t+\theta)\leq\frac{1}{2}$
for $\theta\in\left[-1,0\right]$. 
\end{proof}
It is easy to see that 
\[
I^{*}=1-\frac{1}{R_{0}}\leq\frac{1}{2},
\]
when $1<R_{0}\le2$ holds. To show global attractivity we use the
monotonicity of the quadratic function: 
\[
0\leq x\leq y\le\frac{1}{2}\to(1-x)x\le(1-y)y.
\]

\begin{prop}
\label{prop:ga1}Let us assume that $1<R_{0}\leq2$. Then 
\[
\lim_{t\to\infty}I(t)=I^{*}
\]
for any $\psi\in Y_{+}$.\end{prop}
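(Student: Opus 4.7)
The plan is to apply the standard fluctuation method, which is made clean by the key structural fact supplied by Lemma \ref{lem:inv1}: in this regime the solution is eventually confined to $[0,\tfrac12]$, where the nonlinearity $x\mapsto x(1-x)$ is monotonically \emph{increasing}. Thanks to Proposition \ref{prop:persistence} we also have $\underline{I}>0$, so both $\underline{I}$ and $\overline{I}$ lie in $(0,\tfrac12]$. Since $I^{*}=1-1/R_{0}\le\tfrac12$ in this regime, the target equilibrium value sits inside the same monotonicity interval, and the whole problem reduces to a scalar sandwich argument.

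Concretely, I would introduce the monotone envelopes
\[
m(t):=\inf_{s\ge t}I(s),\qquad M(t):=\sup_{s\ge t}I(s),
\]
so that $m(t)\uparrow\underline{I}$ and $M(t)\downarrow\overline{I}$ as $t\to\infty$, and $M(t)\le\tfrac12$ for $t\ge0$ by Lemma \ref{lem:inv1}. For $t\ge1$ and every $a\in[0,1]$ we have $m(t-1)\le I(t-a)\le M(t-1)\le\tfrac12$, so the monotonicity of $x(1-x)$ on $[0,\tfrac12]$ gives the pointwise sandwich
\[
m(t-1)\bigl(1-m(t-1)\bigr)\le I(t-a)\bigl(1-I(t-a)\bigr)\le M(t-1)\bigl(1-M(t-1)\bigr).
\]
Substituting this into the renewal equation (\ref{eq:I_RE2}) and using $\beta\int_{0}^{1}\mathcal{F}(a)e^{-\mu a}\,da=R_{0}$ yields
\[
R_{0}m(t-1)\bigl(1-m(t-1)\bigr)\le I(t)\le R_{0}M(t-1)\bigl(1-M(t-1)\bigr),\qquad t\ge1.
\]

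Taking $\liminf$ of the left inequality and $\limsup$ of the right one, and invoking continuity of $x\mapsto x(1-x)$ together with $m(t-1)\to\underline{I}$, $M(t-1)\to\overline{I}$, I obtain
\[
R_{0}\underline{I}(1-\underline{I})\le\underline{I},\qquad \overline{I}\le R_{0}\overline{I}(1-\overline{I}).
\]
Dividing by $\underline{I}>0$ and $\overline{I}>0$ respectively (the positivity coming from Proposition \ref{prop:persistence}) gives $\underline{I}\ge 1-1/R_{0}=I^{*}$ and $\overline{I}\le 1-1/R_{0}=I^{*}$, hence $I^{*}\le\underline{I}\le\overline{I}\le I^{*}$, so $\lim_{t\to\infty}I(t)=I^{*}$.

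No step in this scheme is really an obstacle: the only substantive ingredient is the monotonicity of the quadratic map on $[0,\tfrac12]$, and Lemma \ref{lem:inv1} and Proposition \ref{prop:persistence} have already placed the orbit inside the interval where this monotonicity applies. The genuine difficulty, as the authors themselves flag, is reserved for the complementary case $R_{0}>2$, in which $I^{*}>\tfrac12$ and the solution need not lie on a single monotone branch of $x(1-x)$; there a different argument (presumably decomposing into the two branches or using a more delicate iterative bootstrap) will be required.
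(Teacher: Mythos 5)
Your proposal is correct and follows essentially the same route as the paper: confinement to $[0,\tfrac12]$ via Lemma \ref{lem:inv1}, positivity of $\underline{I}$ via Proposition \ref{prop:persistence}, and the monotonicity of $x\mapsto x(1-x)$ on $[0,\tfrac12]$ to obtain $\underline{I}\ge R_{0}\underline{I}(1-\underline{I})$ and $\overline{I}\le R_{0}\overline{I}(1-\overline{I})$, hence $\underline{I}=\overline{I}=I^{*}$. Your explicit monotone-envelope sandwich merely spells out the step the paper states directly as $\underline{I}\ge G(\hat{\underline{I}})$ and $\overline{I}\le G(\hat{\overline{I}})$.
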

\begin{proof}
From Proposition \ref{prop:persistence} and Lemma \ref{lem:inv1}
it holds that 
\[
0<\underline{I}\le\overline{I}\leq\frac{1}{2}.
\]
It then holds that 
\[
\underline{I}\ge G(\hat {\underline{I}})=R_{0}\underline{I}\left(1-\underline{I}\right)
\]
while 
\[
\overline{I}\leq G(\hat {\overline{I}})=R_{0}\overline{I}\left(1-\overline{I}\right).
\]
Therefore we get the following inequality 
\[
R_{0}\left(1-\underline{I}\right)\leq1\leq R_{0}\left(1-\overline{I}\right),
\]
which implies 
\[
\overline{I}=\underline{I}=I^{*}=1-\frac{1}{R_{0}}.
\]

\end{proof}
Next we assume that $R_{0}>2$ holds. It will be shown that all positive
solutions are attracted to the set 
\[
Y_{2}=\left\{ \phi\in Y\left|\frac{1}{2}\leq\phi(\theta)\leq1,\ \theta\in\left[-1,0\right]\right.\right\} .
\]
In Iggidr et al. \cite{Iggidr:2010} it is shown that $Y_{2}$ is
an invariant set and that the semiflow is monotone in $Y_{2}$ when
$\mathcal{F}$ is the step function \eqref{eq:step}. 
\begin{lem}
\label{lem:inv2}Let us assume that $R_{0}>2$ holds. For any $\psi\in Y_{+}$
there exists $\tau$ such that 
\[
I_{t}\in Y_{2},\ t>\tau.
\]
\end{lem}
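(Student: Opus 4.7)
My plan is to exploit the delay differential equation \eqref{eq:I_DE} by establishing a history-independent lower bound on $\dot I(t_0)$ at every instant $t_0$ with $I(t_0)=\tfrac{1}{2}$. Using $\mathcal{F}(0)=1$, $\mathcal{F}(1)=0$, and integrating by parts,
\[
-\int_0^1 e^{-\mu a}\,d\mathcal{F}(a)=1-\mu\int_0^1 e^{-\mu a}\mathcal{F}(a)\,da=1-\frac{\mu R_0}{\beta}.
\]
Since $f(x):=x(1-x)\le\tfrac{1}{4}$ for every $x\in[0,1]$, the delay term in \eqref{eq:I_DE} admits the uniform estimate $-\beta\int_0^1 f(I(t-a))e^{-\mu a}\,d\mathcal{F}(a)\ge -(\beta/4)(1-\mu R_0/\beta)$. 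Substituting $I(t_0)=\tfrac{1}{2}$ into \eqref{eq:I_DE} then gives
\[
\dot I(t_0)\ge\frac{\beta}{4}-\frac{\mu}{2}-\frac{\beta}{4}\left(1-\frac{\mu R_0}{\beta}\right)=\frac{\mu(R_0-2)}{4},
\]
strictly positive for $\mu>0$ and $R_0>2$, and crucially independent of the values of $I$ on $[t_0-1,t_0]$.

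For $\mu>0$ this rules out any downward crossing of the level $\tfrac{1}{2}$: if $I(t_0)>\tfrac{1}{2}$ for some $t_0$ and $t_1:=\inf\{t>t_0:I(t)\le\tfrac{1}{2}\}<\infty$, continuity would give $I(t_1)=\tfrac{1}{2}$ while the minimality of $t_1$ forces $\dot I(t_1)\le 0$, contradicting the strict positivity above. Hence the existence of a single $t_0\ge 0$ with $I(t_0)>\tfrac{1}{2}$ implies $I(t)>\tfrac{1}{2}$ for all $t\ge t_0$, and therefore $I_t\in Y_2$ for all $t\ge t_0+1$, which is the lemma with $\tau:=t_0+1$.

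To produce such a $t_0$ I argue by contradiction. Suppose $I(t)\le\tfrac{1}{2}$ for all sufficiently large $t$. Setting $\underline{I}:=\liminf_{t\to\infty}I(t)$, Proposition~\ref{prop:persistence} gives $\underline{I}>0$, while by hypothesis $\underline{I}\le\tfrac{1}{2}$. Since $f$ is monotone increasing on $[\underline{I},\tfrac{1}{2}]$, the standard fluctuation argument applied to \eqref{eq:I_RE2}---bounding $f(I(t-a))\ge f(\underline{I}-\epsilon)$ for large $t$, passing to the liminf, and letting $\epsilon\to 0^+$---yields $\underline{I}\ge R_0\underline{I}(1-\underline{I})$, hence $\underline{I}\ge 1-1/R_0=I^*$. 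But $R_0>2$ forces $I^*>\tfrac{1}{2}$, contradicting $\underline{I}\le\tfrac{1}{2}$. Together with the previous paragraph this settles the lemma when $\mu>0$.

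The main obstacle I anticipate is the boundary case $\mu=0$, in which the key bound only yields $\dot I(t_0)\ge 0$, with equality possible when $I(t_0-a)=\tfrac{1}{2}$ for $-d\mathcal{F}$-almost every $a$. I plan to exclude this using the identity $\psi(0)=G(\psi)$ built into $Y$: propagating a putative degenerate downward crossing by means of a higher-order Taylor expansion of $I$ at $t_0$ would force $I\equiv\tfrac{1}{2}$ on a subinterval of $[t_0-1,t_0]$ of length one, but applying $I(t)=G(I_t)$ at its right endpoint would then evaluate to $(\beta/4)\int_0^1\mathcal{F}(a)\,da=R_0/4\ne\tfrac{1}{2}$ when $\mu=0$ and $R_0>2$. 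The technical heart of this final paragraph, namely the higher-derivative iteration required to pass from ``$I=\tfrac{1}{2}$ on the support of $-d\mathcal{F}$'' to ``$I\equiv\tfrac{1}{2}$ on an interval'', is the most delicate portion of the argument.
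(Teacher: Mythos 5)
Your argument is correct and follows essentially the same route as the paper: first the fluctuation argument on the renewal equation showing no solution can remain below $\tfrac{1}{2}$ forever (since that would force $\underline{I}\ge 1-1/R_{0}>\tfrac{1}{2}$), then the history-independent bound $\dot I(t_{0})\ge\tfrac{\mu}{4}(R_{0}-2)>0$ at any point where $I(t_{0})=\tfrac{1}{2}$ (identical to the paper's $\tfrac{1}{2}\mu(\tfrac{1}{2}R_{0}-1)$) to rule out a downward crossing. The only deviations are a harmless sign slip in the sentence introducing the estimate of the delay term (your displayed inequality for $\dot I(t_{0})$ is the correct one) and your extra concern about $\mu=0$, which is moot here since the paper takes the mortality rate $\mu$ to be positive throughout (it appears in denominators in this very proof).
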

\begin{proof}
First let us show that there exists no solution that is bounded by
$\frac{1}{2}$ from above. To derive a contradiction, suppose that
there exists a solution such that 
\[
I(t)\leq\frac{1}{2}\ \text{for any }t\geq0.
\]
Then 
\begin{equation}
0<\underline{I}\leq\overline{I}\leq\frac{1}{2}\label{eq:sup_inf}
\end{equation}
holds. From (\ref{eq:I_RE2}) one sees that 
\[
\underline{I}\geq G(\hat{\underline{I}})=R_{0}\underline{I}(1-\underline{I}),
\]
which implies 
\[
\underline{I}\ge I^{*}=1-\frac{1}{R_{0}}>\frac{1}{2}.
\]
Thus we obtain a contradiction to (\ref{eq:sup_inf}), so the solution
can not stay in $Y_{1}$ and leaves the set $Y_{1}$ eventually. Now
consider a solution that oscillates about $\frac{1}{2}$. We show
that the solution never returns to the set $Y_{1}$ once it crosses
$\frac{1}{2}$. To see this we compute the differentiation of $I(t)$
when $I(t)=\frac{1}{2}$ holds. From (\ref{eq:I_DE}) we get 
\begin{align*}
I^{\prime}(t) & =\beta\left(1-I(t)\right)I(t)+\beta\int_{0}^{1}\left(1-I(t-a)\right)I(t-a)e^{-\mu a}d\mathcal{F}(a)-\mu I(t)\\
 & =\frac{1}{4}\beta+\beta\int_{0}^{1}\left(1-I(t-a)\right)I(t-a)e^{-\mu a}d\mathcal{F}(a)-\frac{1}{2}\mu.
\end{align*}
Since for any nonnegative solutions it holds that
\[
\int_{0}^{1}\left(1-I(t-a)\right)I(t-a)e^{-\mu a}d\mathcal{F}(a)\ge\frac{1}{4}\int_{0}^{1}e^{-\mu a}d\mathcal{F}(a),
\]
it follows 
\begin{align*}
I^{\prime}(t) & \geq\frac{1}{4}\beta+\frac{1}{4}\beta\int_{0}^{1}e^{-\mu a}d\mathcal{F}(a)-\frac{1}{2}\mu\\
 & =\frac{1}{2}\mu\left\{ \frac{1}{2}\frac{\beta}{\mu}\left(1+\int_{0}^{1}e^{-\mu a}d\mathcal{F}(a)\right)-1\right\} .
\end{align*}
Since 
\[
\frac{\beta}{\mu}\left(1+\int_{0}^{1}e^{-\mu a}d\mathcal{F}(a)\right)=\beta\int_{0}^{1}e^{-\mu a}\mathcal{F}(a)da=R_{0}
\]
holds, we obtain 
\[
I^{\prime}(t)\geq\frac{1}{2}\mu\left(\frac{1}{2}R_{0}-1\right)>0,
\]
which implies that if the solution once crosses $\frac{1}{2}$ then
it is bounded below from $\frac{1}{2}$. 
\end{proof}

\begin{prop}
\label{prop:ga2}Let us assume that $R_{0}>2$ holds. Then 
\[
\lim_{t\to\infty}I(t)=I^{*}
\]
for any $\psi\in Y_{+}$.\end{prop}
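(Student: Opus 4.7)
The plan is to mimic the structure of Proposition \ref{prop:ga1}, but because $I^{*}\in(1/2,1)$ when $R_{0}>2$, we use the \emph{decreasing} monotonicity of $x\mapsto x(1-x)$ on $[1/2,1]$, and we must work with the delay-differential form \eqref{eq:I_DE} rather than with the integral form \eqref{eq:I_RE2}. By Lemma \ref{lem:inv2} we may assume $I(t)\in[1/2,1]$ for $t$ large, hence $1/2\leq \underline{I}\leq \overline{I}\leq 1$. The direct liminf/limsup estimate from \eqref{eq:I_RE2}, exploiting the monotonicity of $x(1-x)$ on $[1/2,1]$, only gives the cross-inequalities $\overline{I}\leq R_{0}\underline{I}(1-\underline{I})$ and $\underline{I}\geq R_{0}\overline{I}(1-\overline{I})$, which do not force $\underline{I}=\overline{I}$: a two-cycle of the logistic map $x\mapsto R_{0}x(1-x)$ satisfies them with equality. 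The additional local term $\beta(1-I(t))I(t)-\mu I(t)$ appearing in \eqref{eq:I_DE} is what allows the argument to close.

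The first step is to apply a standard fluctuation lemma for bounded $C^{1}$ functions with bounded derivative to produce sequences $t_{n}\to\infty$ with $I(t_{n})\to\overline{I}$, $I'(t_{n})\to 0$, and $s_{n}\to\infty$ with $I(s_{n})\to\underline{I}$, $I'(s_{n})\to 0$; the boundedness of $I'$ follows at once from \eqref{eq:I_DE}, and the degenerate case where $I$ is eventually monotone is immediate, since then $\underline{I}=\overline{I}$ and passing to the limit in \eqref{eq:I_RE2} forces this common value to be $I^{*}$. The key identity, obtained by integration by parts, is
\[
\int_{0}^{1}e^{-\mu a}\,d\mathcal{F}(a)=\mathcal{F}(1)e^{-\mu}-\mathcal{F}(0)+\mu\int_{0}^{1}\mathcal{F}(a)e^{-\mu a}\,da=\frac{\mu R_{0}}{\beta}-1,
\]
which is strictly negative. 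Using the decreasing monotonicity of $x(1-x)$ on $[1/2,1]$ together with $I(t_{n}-a)\leq\overline{I}+\varepsilon$ for $n$ large, one obtains $(1-I(t_{n}-a))I(t_{n}-a)\geq (1-\overline{I}-\varepsilon)(\overline{I}+\varepsilon)$; multiplying by the nonpositive measure $e^{-\mu a}\,d\mathcal{F}(a)$ reverses the inequality. Substituting into \eqref{eq:I_DE} at $t=t_{n}$ with $I'(t_{n})\to 0$, then letting $n\to\infty$ and $\varepsilon\to 0$, the coefficients recombine via $\beta+(\mu R_{0}-\beta)=\mu R_{0}$ to give $\mu\overline{I}\leq \mu R_{0}\,\overline{I}(1-\overline{I})$, hence $\overline{I}\leq I^{*}$. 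The symmetric argument at $s_{n}$, using $I(s_{n}-a)\geq \underline{I}-\varepsilon$ and reversing the direction of the estimates, yields $\underline{I}\geq I^{*}$. Combining, $\underline{I}=\overline{I}=I^{*}$.

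The main obstacle I anticipate is the careful sign bookkeeping when the decreasing monotonicity of $x(1-x)$ on $[1/2,1]$ is coupled with the nonpositive Riemann--Stieltjes measure $d\mathcal{F}$: every inequality flips when one multiplies by $d\mathcal{F}$, and the cancellation $\beta h(\overline{I})+(\mu R_{0}-\beta)h(\overline{I})=\mu R_{0}h(\overline{I})$ is the crucial algebraic identity that produces the clean fixed-point inequality $\overline{I}\leq R_{0}\overline{I}(1-\overline{I})$. Verifying the regularity of $I$ and $I'$ required for the fluctuation lemma is routine from \eqref{eq:I_DE} and the boundedness of $I$ in $[0,1]$.
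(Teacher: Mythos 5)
Your proposal is correct and follows essentially the same route as the paper: after invoking Lemma \ref{lem:inv2} to confine the solution to $Y_{2}$, the paper also applies the fluctuation lemma to the differentiated form \eqref{eq:I_DE}, uses the decreasing monotonicity of $x\mapsto x(1-x)$ on $[\tfrac12,1]$ against the nonpositive measure $e^{-\mu a}\,d\mathcal{F}(a)$, and recombines the coefficients via $\frac{\beta}{\mu}\left(1+\int_{0}^{1}e^{-\mu a}\,d\mathcal{F}(a)\right)=R_{0}$ to obtain $\underline{I}\geq I^{*}\geq\overline{I}$. Your added remarks (why the pure integral-equation estimate only yields non-closing cross-inequalities, and the eventually monotone case) are sound but not needed beyond what the paper does.
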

\begin{proof}
From Lemma \ref{lem:inv2} it is sufficient to consider a solution
in $Y_{2}$. Consider a sequence $\left\{ p_{n}\right\} _{n=1}^{\infty}$
such that $I(p_{n})\to\underline{I}$ as $n\to\infty$. We apply the
fluctuation lemma to the equation (\ref{eq:I_DE}) to get

\begin{align*}
0 & =\beta(1-\underline{I})\underline{I}+\beta\lim_{n\to\infty}\int_{0}^{1}\left(1-I(p_{n}-a)\right)I(p_{n}-a)e^{-\mu a}d\mathcal{F}(a)-\mu\underline{I}\\
 & \ge\beta(1-\underline{I})\underline{I}+\beta(1-\underline{I})\underline{I}\int_{0}^{1}e^{-\mu a}d\mathcal{F}(a)-\mu\underline{I}\\
 & =\mu\underline{I}\left\{ \frac{\beta}{\mu}\left(1+\int_{0}^{1}e^{-\mu a}d\mathcal{F}(a)\right)\left(1-\underline{I}\right)-1\right\} .
\end{align*}
From Proposition \ref{prop:persistence} we have $\underline{I}>0$.
Thus 
\[
0\ge R_{0}\left(1-\underline{I}\right)-1,
\]
which implies that $\underline{I}\ge I^{*}$. Similarly we can apply
the fluctuation lemma for the sequence of a solution tending to $\overline{I}$.
Then we obtain $\overline{I}\leq I{}^{*}$. Consequently 
\[
\underline{I}\ge I^{*}\geq\overline{I}
\]
holds and we obtain the conclusion. 
\end{proof}
Finally we obtain the following result, combining the results of global
attractivity of the endemic equilibrium established in Propositions
\ref{prop:ga1} and \ref{prop:ga2} with local stability shown in
Proposition \ref{prop:LAS}.
\begin{thm}
\label{thm:gas}Let us assume that $R_{0}>1$ holds. The endemic equilibrium
is globally asymptotically stable in $Y_{+}$.
\end{thm}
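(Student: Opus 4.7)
The statement is a packaging result that combines the three preceding propositions. Recall that global asymptotic stability of an equilibrium comprises two separate properties: (i) Lyapunov stability, namely that solutions starting sufficiently close to the equilibrium remain close for all future times, and (ii) global attractivity in the prescribed invariant set, here $Y_+$. My plan is to verify (i) and (ii) separately by citing the earlier results, and then to glue them together.

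For (i), I would invoke Proposition \ref{prop:LAS} directly. The principle of linearized stability for Volterra functional equations from Diekmann et al.\ applies because every root of the characteristic equation \eqref{eq:cheq} has negative real part, a fact established in the proof of Proposition \ref{prop:LAS} by ruling out both positive and purely imaginary roots. Consequently $\hat{I^*}$ is locally (Lyapunov) asymptotically stable, and no further work is needed at this step.

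For (ii), I would split $R_0 > 1$ into the two exhaustive cases treated in Section 4: if $1 < R_0 \leq 2$, Proposition \ref{prop:ga1} yields $\lim_{t\to\infty} I(t) = I^*$ for every $\psi \in Y_+$; if $R_0 > 2$, Proposition \ref{prop:ga2} delivers the same limit. Together these cover all $R_0 > 1$ and give global attractivity of $\hat{I^*}$ in $Y_+$. Combining (i) with (ii) immediately gives global asymptotic stability in $Y_+$.

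In this assembly there is no real obstacle: the substantive work has already been carried out in the earlier propositions, in particular the dichotomy that exploits the monotonicity of $x \mapsto x(1-x)$ on $\left[0,\tfrac{1}{2}\right]$ versus $\left[\tfrac{1}{2},1\right]$, together with the forward-invariance/absorption arguments in Lemmas \ref{lem:inv1} and \ref{lem:inv2}. The present theorem is a one- or two-sentence bookkeeping step that records the conclusion.
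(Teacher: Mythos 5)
Your proposal is correct and matches the paper's own argument exactly: the paper states Theorem \ref{thm:gas} as the combination of local asymptotic stability from Proposition \ref{prop:LAS} with the global attractivity supplied by Proposition \ref{prop:ga1} for $1<R_{0}\leq2$ and Proposition \ref{prop:ga2} for $R_{0}>2$. Your decomposition into Lyapunov stability plus global attractivity, with the same case split on $R_{0}$, is precisely the bookkeeping step the authors perform.
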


\section{Discussion}

In the paper \cite{Hethcote:1995}, Hethcote and van den Driessche
propose an SIS type epidemic model with disease induced death rate.
The model developed in \cite{Hethcote:1995} allows a general description
for the distribution of the infectious period among individuals. To
proceed the mathematical analysis it is then specified as a step function
\eqref{eq:step}, implying that for every infective individual the
infectious period is exactly same. The authors in \cite{Hethcote:1995}
study stability of the disease free equilibrium and the endemic equilibrium.
In Theorem 5.1 in \cite{Hethcote:1995} it is shown that the endemic
equilibrium is asymptotically stable if there is no disease induced
death rate. We solve the open problem mentioned in the paper: the
endemic equilibrium is \textit{globally} asymptotically stable if
there is no disease induced death rate. Moreover the global stability
holds for the model with any distribution of the infectious period.
The equation (\ref{eq:I_RE2}) is also derived from the model considered
in \cite{Hethcote:2000} for the case of nonfatal diseases. In the
first model considered in \cite{Hethcote:2000} the authors assume
constant recruitment for the total population, differently from the
model in \cite{Hethcote:1995}. 

In the paper \cite{Gripenberg:1981} Gripenberg studied a similar
integral equation describing disease transmission dynamics motivated
by the study in \cite{Diekmann:1982}. Conditions for global attractivity
of the equilibria are obtained. It involves technical conditions concerning
kernels denoting variable infectivity and distribution for the sojourn
period. The model considered in \cite{Diekmann:1982} is also an SIS
type epidemic model, where individual's infectivity varies according
to the progression of the infection-age (the time elapsed since the
infection). The authors in \cite{Diekmann:1982} deduce a characteristic
equation for an endemic equilibrium and show that destabilization
of the endemic equilibrium is possible. In those papers \cite{Diekmann:1982,Gripenberg:1981}
the integral equation is formulated in terms of newly infectives per
unit time at time $t$.

Here we assume that the maximum infectious period is finite. The assumption
facilitates our analysis as we have the equation with \textit{finite}
delay \cite{Diekmann:2007}. The global stability results would be
extended to the case that the maximum infectious period is infinite
i.e., $h=\infty$, using the fading memory space \cite{Diekmann:2012,Hale:1978}, see also \cite{gerSIMA,gerMBE} for the treatment of such cases in epidemiological models.

\subsection*{Acknowledgment}
We thank the reviewer for comments which improve the original manuscript.

The research was initiated during the second author's visiting to
the University of Tokyo on August, 2015 and completed upon the first author's visiting to Bolyai Institute, University of Szeged on February, 2016.
The first author's research visiting was supported by JSPS Bilateral Joint Research Project (Open Partnership).
The first author was supported by JSPS Fellows, No.268448 of Japan Society for the Promotion of Science and JSPS Grant-in-Aid for Young Scientists (B) 16K20976. The second author was supported by European Research Council StG Nr. 259559 and OTKA K109782.


\begin{thebibliography}{10}
\bibitem{Diekmann:2007}O. Diekmann, Ph. Getto, M. Gyllenberg, { \it Stability
and bifurcation analysis of Volterra functional equations in the light
of suns and stars}, SIAM J. Math. Anal. 39 (2007/08), no. 4, 1023--1069.

\bibitem{Diekmann:2012}O. Diekmann, M. Gyllenberg, {\it Equations with
infinite delay: blending the abstract and the concrete}, J. Diff. Eq.
252 (2012), no. 2, 819--851.

\bibitem{Diekmann:1982}O. Diekmann, R. Montijn, {\it Prelude to Hopf bifurcation
in an epidemic model: analysis of a characteristic equation associated
with a nonlinear Volterra integral equation}, J. Math. Biol. 14 (1982)
117--127.

\bibitem{gerSIMA} S. Gourley, G. R\"ost, H.~R. Thieme
Uniform persistence in a model for bluetongue dynamics
SIAM Journal on Mathematical Analysis  46 (2014) no. 2, 1160--1184 

\bibitem{Gripenberg:1981}G. Gripenberg, {\it On some epidemic models}, Quart. Appl. Math. 39 (1981/82), no. 3, 317--327.

\bibitem{Hale:1978}J.~K. Hale, J. Kato, {\it Phase space for retarded equations
with infinite delay}, Funkcial. Ekvac 21.1 (1978) 11--41.

\bibitem{Hethcote:1995}H.~W. Hethcote, P. van den Driessche, {\it An SIS
	epidemic model with variable population size and a delay}, J. Math.
Biol., 34 (1995) 177--194

\bibitem{Hethcote:2000}H.~W. Hethcote, P. van den Driessche, {\it Two SIS
epidemiologic models with delays}, J. Math. Bio. 40 (2000) 3--26.


\bibitem{Iggidr:2010}A. Iggidr, K. Niri, E. Ould Moulay Ely, {\it Fluctuations
in a SIS epidemic model with variable size population}, Appl. Math.
Comput., 217 (2010) 55--64

\bibitem{Liu:2015}B. Liu, {\it Convergence of an SIS epidemic model with
a constant delay}, Appl. Math. Lett. 49 (2015) 113--118

\bibitem{gerMBE} G. R\"ost, J. Wu,
SEIR epidemiological model with varying infectivity and infinite delay,
Math. Biosci. Eng., 5 (2008) no. 2, 389--402


\end{thebibliography}
\end{document}